\newcommand{\C}{{\mathbb{C}}}
\newcommand{\F}{{\mathbb{F}}}
\newcommand{\N}{{\mathbb{N}}}
\newcommand{\Q}{{\mathbb{Q}}}
\newcommand{\cA}{{\mathfrak{A}}}
\newtheorem{thm}{Theorem}
\newtheorem{lem}[thm]{Lemma}
\newtheorem{prop}[thm]{Proposition}
\theoremstyle{definition}
\newtheorem{rem}[thm]{Remark}
\newtheorem{defn}[thm]{Definition}
\newtheorem{exmp}[thm]{Example}
\begin{document}

\title[Jordan--Chevalley decomposition]{On the 
Jordan--Chevalley decomposition of a matrix}

\author{Meinolf Geck}
\address{IDSR - Lehrstuhl f\"ur Algebra, Universit\"at Stuttgart,
Pfaffenwaldring 57, 70569 Stuttgart, Germany}
\email{meinolf.geck@mathematik.uni-stuttgart.de}

\subjclass[2000]{Primary 15A21, Secondary 11C99}
\keywords{Matrices, polynomials, Jordan--Chevalley decomposition, 
computer algebra}

\date{May 12, 2022}

\begin{abstract} The purpose of this note is to advertise an elegant
algorithmic proof for the Jordan--Chevalley decomposition of a matrix,
following and (slightly) revising the discussion of Couty, Esterle und
Zarouf (2011). The basic idea of that method goes back to Chevalley (1951).
\end{abstract}

\maketitle

\section{Introduction}

Let $K$ be a field and $M_n(K)$ be the vector space of $n\times n$-matrices
with entries in $K$. Let $A\in M_n(K)$ be such that the characteristic 
polynomial of~$A$ splits into linear factors over $K$. Then we can write 
uniquely $A=D+N$ where $D,N\in M_n(K)$ are such that $D$ is diagonalisable, 
$N$ is nilpotent and $D\cdot N=N\cdot D$. This is called the 
\textit{Jordan--Chevalley decomposition} of~$A$. Usually, this is deduced 
from the Jordan normal form of $A$ (which is a stronger result). However, 
there is a direct, short algorithmic proof for the existence of $D,N$ 
which is inspired by the Newton iteration for finding roots of a function. 
This goes back to Chevalley and is discussed in Couty et al.~\cite{cez} 
but, as the authors write, it does not seem to be widely known (especially
not in teaching normal forms of matrices in Linear Algebra).

The purpose of this note is, firstly, to advertise that proof which yields 
an elegant procedure for producing $D$ and~$N$, without even knowing the 
eigenvalues of $A$ (which may be a considerable advantage in some 
situations). Secondly, we somewhat reduce the required prerequisites 
about polynomials. The algorithm in \cite{cez} uses the ``square-free 
part'' of a non-constant polynomial~$f\in K[X]$, the usual 
definition of which relies on the prime factorisation of~$f$. But in basic 
courses on Linear Algebra, the prime factorisation of polynomials is often 
not yet available. And even if it is, then its use in concrete examples 
is not obvious because, in general, the prime factorisation of a polynomial
is difficult to compute. Our aim is to present the algorithm in a way that 
avoids that completely; all we shall need is the formal derivative and 
the Euclidean Algorithm for polynomials in $K[X]$.

Thirdly, a {\sf GAP} \cite{gap4} program for computing the Jordan--Chevalley
decomposition, based on the algorithm in this note and extending the 
functionality already developed in \cite{frob}, is now available in the
file {\tt frobenius.g} at 

\centerline{\url{https://pnp.mathematik.uni-stuttgart.de/idsr/idsr1/geckmf/}}

\noindent This program appears to work quite well even for large matrices, 
especially when combined with the efficient algorithm for computing the 
Frobenius normal form of matrices in \cite{frob}; see Example~\ref{ex1} at 
the very end of this note.

\section{Square-free polynomials}
Let $K[X]$ be the polynomial ring over $K$ in the indeterminate $X$. We 
define the ``formal derivative'' as the linear map $D\colon K[X]\rightarrow 
K[X]$ such that $D(1)=0$ and $D(X^i)=iX^{i-1}$ for all $i\geq 1$. The 
usual product rule holds in this case as well (as one easily checks): 
\[D(fg)=D(f)g+fD(g)\qquad \mbox{for all $f,g\in K[X]$}.\]
Furthermore, let $f,g\in K[X]$ be such that $f\neq 0$ or $g\neq 0$. Then 
the Euclidean Algorithm yields the existence of a unique monic greatest 
common divisior $d=\mbox{gcd}(f,g) \in K[X]$, as well as polynomials $r,s
\in K[X]$ such that $d=rf+sg$. We say that $f,g$ are \textit{coprime} if 
$\mbox{gcd}(f,g)=1$.~---~This is all we shall require in the following 
discussion. 

\begin{defn} \label{def1} Let $f\in K[X]$ be non-zero. We say that $f$ is
\textit{square-free} if $f$ and $D(f)$ are coprime, that is, $\mbox{gcd}
(f,D(f))=1$. In particular, a non-zero constant polynomial is 
square-free. (See also Remark~\ref{rem22} below.)
\end{defn}

\begin{lem} \label{lem1} Let $f,g\in K[X]$ be non-zero.
\begin{itemize}
\item[(a)] If $f$ is square-free and $g\mid f$, then $g$ is also 
square-free.
\item[(b)] If $f,g$ are both square-free and coprime, then $fg$ is also 
square-free.
\item[(c)] If $f,g$ are square-free and $m\in K[X]$ is a least common
multiple of $f$ and $g$, then $m$ is also square-free.
\end{itemize}
\end{lem}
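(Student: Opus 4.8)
The plan is to reduce all three parts to elementary manipulations with greatest common divisors, using only the product rule $D(fg)=D(f)g+fD(g)$ and the B\'ezout identity supplied by the Euclidean Algorithm. The one auxiliary fact I would isolate at the outset is that if $h,a,b\in K[X]$ satisfy $\gcd(h,a)=1$ and $\gcd(h,b)=1$, then $\gcd(h,ab)=1$; this follows by multiplying the two B\'ezout relations $ra+sh=1$ and $tb+uh=1$, which yields an identity of the form $rt\,ab+(\cdots)h=1$, a B\'ezout relation for $ab$ and $h$. I would also record that square-freeness is unchanged under multiplication by a nonzero scalar $c\in K$, since the monic $\gcd$ of $cf$ and $D(cf)=cD(f)$ equals that of $f$ and $D(f)$.

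For (a), write $f=gh$ with $h\in K[X]$ and set $d=\gcd(g,D(g))$. Then $d\mid g\mid f$, while the product rule gives $D(f)=D(g)h+gD(h)$; since $d\mid D(g)$ and $d\mid g$, both summands are divisible by $d$, so $d\mid D(f)$. Hence $d$ divides $\gcd(f,D(f))=1$, forcing $d=1$, that is, $g$ is square-free.

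For (b), I would work modulo $f$ and modulo $g$ separately. From $D(fg)=D(f)g+fD(g)$ we get $D(fg)\equiv D(f)g\pmod{f}$, so $\gcd(f,D(fg))=\gcd(f,D(f)g)$; as $\gcd(f,D(f))=1$ ($f$ square-free) and $\gcd(f,g)=1$ (coprimality), the auxiliary fact yields $\gcd(f,D(fg))=1$. Symmetrically $\gcd(g,D(fg))=1$, and a final application of the auxiliary fact gives $\gcd(fg,D(fg))=1$, as required.

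Part (c) is where I expect the real work. First I would prove the converse companion to (a): if a product $ab$ is square-free, then $a$ and $b$ are coprime; indeed any common divisor $e$ of $a$ and $b$ divides $ab$ and, via the product rule, also $D(ab)=D(a)b+aD(b)$, hence $e\mid\gcd(ab,D(ab))=1$. Now set $d=\gcd(f,g)$ and write $f=df_1$, $g=dg_1$ with $\gcd(f_1,g_1)=1$. By (a) the factors $d,f_1,g_1$ are each square-free, and applying the companion statement to $f=df_1$ and to $g=dg_1$ shows $\gcd(d,f_1)=\gcd(d,g_1)=1$; thus $d,f_1,g_1$ are pairwise coprime. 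Since any least common multiple of $f$ and $g$ is a nonzero scalar multiple of $fg/d=df_1g_1$ (from the standard relation $\gcd(f,g)\,\mathrm{lcm}(f,g)=fg$ up to a scalar), and square-freeness is scalar-invariant, it suffices to show that $df_1g_1$ is square-free. The auxiliary fact gives $\gcd(df_1,g_1)=1$, so applying (b) first to the coprime square-free pair $d,f_1$ and then to the coprime square-free pair $df_1,g_1$ produces the conclusion. The main obstacle here is assembling the pairwise-coprime factorisation of $m$; once that is in place, (a) and (b) close the argument.
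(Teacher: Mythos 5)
Your proof is correct, and parts (a) and (b) essentially coincide with the paper's own argument: (a) is identical, and in (b) both you and the paper reduce to checking coprimality of $D(fg)$ with $f$ and with $g$ separately, then combine via the multiplicativity of coprimality (your ``auxiliary fact'', which the paper cites as \cite[Remark~4.1(a)]{frob}); your version is in fact slightly leaner, since the paper additionally invokes Euclid's lemma ($a\mid bc$ and $\gcd(a,b)=1$ imply $a\mid c$) from \cite[Remark~4.1(c)]{frob}. The genuine divergence is in (c). The paper simply cites \cite[Lemma~4.3]{frob}, which asserts that any least common multiple $m$ of $f$ and $g$ factors as $m=f_1g_1$ with $f_1\mid f$, $g_1\mid g$ and $f_1,g_1$ coprime, and then concludes at once by (a) and (b). You instead construct such a factorisation from scratch: setting $d=\gcd(f,g)$, $f=df_1$, $g=dg_1$, you prove a converse of (a) (a square-free product has coprime factors) to obtain that $d,f_1,g_1$ are pairwise coprime, and you use the standard relation $\gcd(f,g)\cdot\operatorname{lcm}(f,g)\sim fg$ together with the scalar-invariance of square-freeness to reduce to showing that $df_1g_1$ is square-free. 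This makes the lemma entirely self-contained -- in effect you re-prove the cited \cite[Lemma~4.3]{frob} in the case at hand -- at the cost of two extra (easy) ingredients, the converse of (a) and the gcd--lcm relation; the paper's route is shorter but rests on the external reference. One small redundancy in your argument: your first application of (b), to the pair $d,f_1$, merely recovers the hypothesis that $f=df_1$ is square-free, so a single application of (b), to the coprime square-free pair $f,g_1$, already finishes the proof.
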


\begin{proof} (a) Write $f=gh$ with $h\in K[X]$. Let $0\neq d\in K[X]$ 
be monic such that $d\mid g$ and $d\mid D(g)$. Since $g\mid f$, we have 
$d\mid f$. Since $D(f)=D(g)h+gD(h)$, we also have $d\mid D(f)$. Hence, 
$d\mid D(f)$ and $d\mid f$; so $d=1$.

(b) Let $0\neq d\in K[X]$ be monic such that $d\mid fg$ and $d\mid D(fg)$. 
Let $d_1=\mbox{gcd}(d,f)$ and $d_2:=\mbox{gcd}(d,g)$. Now $d_1\mid d$, 
$d\mid D(f)g+fD(g)$ and $d_1\mid f$, so $d_1\mid D(f)g$. Since $d,g$ are 
coprime it is clear that $d_1$, $g$ are coprime. So we must have $d_1\mid 
D(f)$ (see \cite[Remark~4.1(c)]{frob}) and, hence, $d_1=1$; that is, $d$ 
and~$f$ are coprime. Similarly, one sees that $d$ and $g$ are coprime. So 
$d$ and $fg$ are coprime by \cite[Remark~4.1(a)]{frob}.

(c) By \cite[Lemma~4.3]{frob}, there exist $f_1,g_1\in K[X]$ such that 
$f_1\mid f$, $g_1\mid g$, $m=f_1g_1$ and $f_1,g_1$ are coprime. So $m$ is 
square-free by (a) and (b).
\end{proof}

The field $K$ is called \textit{perfect} if either $\mbox{char}(K)=0$
or $p:=\mbox{char}(K)>0$ and the map $K\rightarrow K$, $x\mapsto x^p$, 
is surjective. In addition to fields of characteristic~$0$, all finite 
fields are known to be perfect; every algebraically closed field is,
of course, perfect.

\begin{prop} \label{prop1} Assume that $K$ is perfect. Let $f\in K[X]$ be 
non-constant. Then there exists a square-free $g\in K[X]$ such that
$g\mid f$ and $f\mid g^m$ for some $m\in\N$.  
\end{prop}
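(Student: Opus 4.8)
The plan is to argue by strong induction on $\deg f$, using the formal derivative $D$ both to recognise when $f$ is already square-free and to break $f$ into proper factors of strictly smaller degree. Since the statement concerns only divisibility, there is no harm in normalising $f$ to be monic. The recursion will always descend to non-constant polynomials of smaller degree, the smallest possible case $\deg f=1$ being covered by the first case below (where $D(f)$ is a non-zero constant, so $\mbox{gcd}(f,D(f))=1$), which is where it terminates.

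For the inductive step I would distinguish three cases according to the behaviour of $D(f)$. First, if $D(f)\neq 0$ and $\mbox{gcd}(f,D(f))=1$, then $f$ is square-free by Definition~\ref{def1} and one simply takes $g=f$ and $m=1$. Second, if $D(f)\neq 0$ but $d:=\mbox{gcd}(f,D(f))\neq 1$, I would split $f=d\cdot(f/d)$. Because $D(f)\neq 0$ always forces $\deg D(f)<\deg f$, one gets $\deg d\leq\deg D(f)<\deg f$, so both $d$ and $f/d$ are non-constant of degree strictly less than $\deg f$; the inductive hypothesis then yields square-free divisors $g_1\mid d$ and $g_2\mid (f/d)$ together with exponents $k_1,k_2\in\N$ satisfying $d\mid g_1^{k_1}$ and $(f/d)\mid g_2^{k_2}$. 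Setting $g:=\mbox{lcm}(g_1,g_2)$, Lemma~\ref{lem1}(c) guarantees that $g$ is again square-free, while $g_1\mid f$ and $g_2\mid f$ give $g\mid f$, and $f=d\cdot(f/d)\mid g_1^{k_1}g_2^{k_2}\mid g^{k_1+k_2}$ gives $f\mid g^m$ with $m=k_1+k_2$.

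The remaining case $D(f)=0$ is the genuinely delicate one, and this is where perfectness of $K$ enters; I expect it to be the main obstacle. Since $f$ is non-constant, $D(f)=0$ can only occur when $p:=\mbox{char}(K)>0$, and then $D(f)=0$ forces every exponent occurring in $f$ to be divisible by $p$, i.e.\ $f=\sum_j a_{pj}X^{pj}$. Using that $K$ is perfect, I would write each coefficient as $a_{pj}=b_j^{\,p}$ and apply the Frobenius identity $(u+v)^p=u^p+v^p$ to obtain $f=f_1^{\,p}$ with $f_1=\sum_j b_jX^j$ non-constant of degree $\deg f/p<\deg f$. Applying the inductive hypothesis to $f_1$ yields a square-free $g\mid f_1$ with $f_1\mid g^k$; then $g\mid f_1\mid f$ and $f=f_1^{\,p}\mid g^{kp}$, so $g$ and $m=kp$ finish the argument. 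The points requiring the most care are precisely the extraction of the $p$-th roots $b_j$ of the coefficients — which is exactly where perfectness is indispensable and where the claim would fail over a non-perfect field — and the bookkeeping of degrees ensuring that every recursive call is applied to a polynomial of strictly smaller degree.
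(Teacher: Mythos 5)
Your proof is correct and follows essentially the same route as the paper's: induction on $\deg f$, splitting $f$ as $\mbox{gcd}(f,D(f))\cdot\bigl(f/\mbox{gcd}(f,D(f))\bigr)$ when $D(f)\neq 0$, combining the two square-free divisors via $\mbox{lcm}$ and Lemma~\ref{lem1}(c), and writing $f$ as a $p$-th power via perfectness of $K$ when $D(f)=0$. The degree bookkeeping and the handling of the characteristic-$p$ case match the paper's argument in all essentials.
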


\begin{proof} We proceed by induction on $\mbox{deg}(f)\geq 1$.
If $\mbox{deg}(f)=1$, then the assertion holds with $g:=f$ and $m:=1$.
Now let $\mbox{deg}(f)\geq 1$. 

First assume that $D(f)\neq 0$; let $f_1:=\mbox{gcd}(f,D(f))\in K[X]$. 
If $f_1$ is constant, then $f$ is square-free and the assertion holds again 
with $g:=f$ and $m:=1$. Otherwise write $f=f_1f_2$ with $f_2\in K[X]$. 
Since $D(f)\neq 0$, we certainly have $\mbox{deg}(f_1)\leq \mbox{deg}(D(f)) 
<\mbox{deg}(f)$. Since $\mbox{deg}(f_1) \geq 1$, we also have $\mbox{deg}
(f_2)<\mbox{deg}(f)$. Hence, by induction, there exist square-free $g_i\in 
K[X]$ such that $g_i\mid f_i$ and $f_i\mid g_i^{m_i}$ for $i=1,2$, where 
$m_i\in \N$. Let $g\in K[X]$ be a least common multiple of $g_1,g_2$. Then 
Lemma~\ref{lem1} shows that $g$ is square-free. Since $g_i\mid f$ for
$i=1,2$, we also have $g\mid f$. Since $f_i\mid g_i^{m_i}$ for $i=1,2$,
we have $f\mid g_1^{m_1}g_2^{m_2}$ and so $f\mid g^m$ for $m:=m_1+m_2$. 

Now assume that $D(f)=0$. Write $f=\sum_{i=0}^n a_iX^i$ where $n=
\mbox{deg}(f)\geq 1$ and $a_i \in K$ for all~$i$. Then $p:=
\mbox{char}(K)>0$ and $a_i=0$ for all $i$ with $p\nmid i$. So $n=pn'$ 
for some $n' \in \N$ and $f=\sum_{j=0}^{n'} a_{pj} X^{jp}$. Since $K$ is 
perfect, we can write $a_{pj}=b_j^p$ with $b_j\in K$. Then $f=h^p$ where
$h:=\sum_{j=0}^{n'} b_jX^j\in K[X]$. By induction, there exists a 
square-free $g\in K[X]$ such that $g\mid h$ and $h\mid g^{m'}$ for some
$m'\in K[X]$. Then $g\mid f$ and $f\mid g^m$ with $m:=m'p$.
\end{proof}

\begin{rem} \label{rem1} Given a non-constant $f\in K[X]$, the above proof 
provides an efficient algorithm for computing the required square-free $g 
\in K[X]$. It is a somewhat more precise version of the procedure described 
in \cite[p.~130]{ln}. (If $\mbox{char}(K)=0$, then it is known that one 
can just take $g=f/\mbox{gcd}(f,D(f))$.) Also note that a least common
multiple of two non-zero $f,g\in K[X]$ is obtained as $fg/\mbox{gcd}(f,g)$.
\end{rem}

\begin{rem} \label{rem22} Assume that $f\in K[X]$ splits into linear
factors in $K$. That is, we can write $f=c(X-\lambda_1)^{n_1}\cdots(X-
\lambda_r)^{n_r}$ where $0\neq c\in K$ and $\lambda_1,\ldots,\lambda_r
\in K$ are the distinct roots of $f$ in $K$; furthermore, $n_i\geq 1$ for
all~$i$. Then one does not need to assume that $K$ is perfect in order to
produce $g\in K[X]$ and $m\in \N$ as in Proposition~\ref{prop1}. Indeed, 
just set $g:=(X-\lambda_1)\cdots (X-\lambda_r)\in K[X]$ and $m:=\max\{n_1,
\ldots,n_r\}$. One easily computes $D(g)$ and checks that $D(g)(\lambda_i)
\neq 0$ for all~$i$; hence, $\mbox{gcd}(g,D(g))=1$.
\end{rem}

\section{The Jordan--Chevalley decomposition}
Let us now fix a matrix $A\in M_n(K)$. We say that $A$ is \textit{nilpotent} 
if $A^m=0$ for some $m\in \N$. We say that $A$ is \textit{semisimple} if 
$A$ is diagonalisable (possibly only over some larger field $L\supseteq 
K$). Let $f\in K[X]$ be non-constant such that $f(A)=0$. 
Standard candidates for $f$ are the characteristic polynomial $\chi_A
\in K[X]$ of $A$ (by the Theorem of Cayley--Hamilton), or the minimal 
polynomial $\mu_A\in K[X]$ of~$A$. For example, the matrix 
\[ A=\left[\begin{array}{cc} 0 & -1 \\ 1 & 0 \end{array}\right]\in M_2(\Q)
\qquad \mbox{with} \qquad \chi_A=X^2+1\in \Q[X],\]
is semisimple, since it is diagonalisable over $\C\supseteq \Q$.

It will be convenient to set $\cA:=\{r(A)\mid r \in K[X]\}\subseteq 
M_n(K)$; note that all elements of $\cA$ commute with each other. 
If $B\in \cA$ is fixed, we denote $B\cdot \cA=\{B\cdot r(A)\mid 
r\in K[X]\}$. 

\begin{lem} \label{lem31} Let $B,C\in \cA$ and $h\in K[X]$. Then we have:
\[h(B+C)-h(B)\in C\cdot \cA\;\;\mbox{and}\;\;h(B+C)-h(B)-D(h)(B)\cdot 
C \in C^2\cdot \cA.\]
\end{lem}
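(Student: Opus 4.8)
The plan is to use the $K$-linearity of both expressions in $h$ to reduce to the case of monomials $h=X^k$, where both assertions fall out of the binomial theorem.

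First I would record the structural fact that legitimises the reduction: since $\cA$ is a commutative ring, the sets $C\cdot\cA$ and $C^2\cdot\cA$ are ideals of $\cA$, hence in particular $K$-subspaces of $M_n(K)$ (closed under addition and under scaling by elements of $K$). The maps $h\mapsto h(B+C)-h(B)$ and $h\mapsto h(B+C)-h(B)-D(h)(B)\cdot C$ are both $K$-linear in $h$, because $h\mapsto h(B+C)$, $h\mapsto h(B)$ and $h\mapsto D(h)$ are each $K$-linear. It therefore suffices to prove both assertions for $h=X^k$; the cases $k\leq 1$ are immediate.

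The crucial point is that $B$ and $C$ commute, both lying in the commutative ring $\cA$, so that the ordinary binomial theorem applies:
\[(B+C)^k=\sum_{i=0}^{k}\binom{k}{i}B^{k-i}C^i.\]
Subtracting the $i=0$ term yields
\[(B+C)^k-B^k=C\cdot\Bigl(\sum_{i=1}^{k}\binom{k}{i}B^{k-i}C^{i-1}\Bigr),\]
and the bracketed factor lies in $\cA$, giving the first assertion. For the second, I would isolate the $i=1$ term $\binom{k}{1}B^{k-1}C=kB^{k-1}C$; since $D(X^k)=kX^{k-1}$, this is precisely $D(h)(B)\cdot C$. Subtracting it leaves
\[(B+C)^k-B^k-D(X^k)(B)\cdot C=C^2\cdot\Bigl(\sum_{i=2}^{k}\binom{k}{i}B^{k-i}C^{i-2}\Bigr)\in C^2\cdot\cA.\]

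I do not expect a genuine obstacle: the entire content is the commutativity of $\cA$ together with the bookkeeping of the binomial expansion, the only minor care being that each coefficient $\binom{k}{i}$ is read as an integer multiple inside $\cA$. A tidier, equivalent route avoids monomials: one invokes the $K$-algebra homomorphism $K[X,Y]\to\cA$ with $X\mapsto B$ and $Y\mapsto C$ (well defined because $\cA$ is commutative), together with the formal Taylor identity $h(X+Y)=h(X)+D(h)(X)\,Y+Y^2q(X,Y)$ in $K[X,Y]$; the latter is established by the same linearity-and-monomials argument, and applying the homomorphism delivers both claims simultaneously.
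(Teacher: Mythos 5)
Your proof is correct and is essentially the paper's own argument: the paper likewise reduces by $K$-linearity to monomials $h=X^m$ and invokes the binomial formula for $(B+C)^m$, leaving the details as immediate. You have simply written out explicitly the bookkeeping (splitting off the $i=0$ and $i=1$ terms) that the paper's two-line proof omits.
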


\begin{proof} It is enough to prove this for $h=X^m$ where $m\geq 0$.
In this case, the assertion immediately follows from the binomial formula
for $(B+C)^m$.
\end{proof}

Given $f\in K[X]$ as above with $f(A)=0$, assume that there is a
square-free $g\in K[X]$ such that $g\mid f$ and $f\mid g^m$ for some
$m\in \N$. (If $K$ is perfect, this holds by Lemma~\ref{lem1}; if $f$ 
splits into linear factors, see Remark~\ref{rem22}.) Since $g,D(g)$ are 
coprime, there exist $\tilde{g}, \tilde{g}'\in K[X]$ such that $1=
\tilde{g}D(g)+ \tilde{g}'g$; note that $\tilde{g}\neq 0$. We define a 
sequence of matrices $(A_k)_{k\geq 0}$ in $M_n(K)$ by 
\begin{center}
\fbox{$\;A_0:=A\qquad \mbox{and}\qquad A_{k+1}:=A_k-g(A_k)\cdot 
\tilde{g}(A_k) \quad \mbox{for $k\geq 0$}.\;$}
\end{center}

\begin{thm}[Cf.\ Couty et al. \protect{\cite[Th\'eor\`eme 1]{cez}}]
Let $k_0\geq 0$ be such that $2^{k_0}\geq m$. Then $g(A_{k_0})=0$, 
$A_{k_0}$ is semisimple and $A-A_{k_0}$ is nilpotent; furthermore,
$A_{k_0}\in\cA$. Thus, $A=D+N$ with $D:=A_{k_0}$ and $N:=A-A_{k_0}$ is 
the Jordan--Chevalley decomposition of~$A$.
\end{thm}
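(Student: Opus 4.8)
The plan is to read the recursion $A_{k+1}=A_k-g(A_k)\tilde g(A_k)$ as Newton's iteration for the equation $g(x)=0$: the matrix $\tilde g(A_k)$ replaces the (generally non-invertible) derivative $D(g)(A_k)$, and the B\'ezout relation $1=\tilde g\,D(g)+\tilde g'g$ ensures that $\tilde g(A_k)D(g)(A_k)\equiv I$ modulo $g(A_k)$. Write $N:=g(A)$. Since $f(A)=0$ and $f\mid g^m$, we have $g(A)^m=(g^m)(A)=0$, so $N$ is nilpotent with $N^m=0$. Moreover $\cA=K[A]$ is closed under evaluation of polynomials (if $B=s(A)\in\cA$ then $h(B)=(h\circ s)(A)\in\cA$), so an easy induction gives $A_k\in\cA$ for all $k$, since $A_{k+1}$ is a polynomial expression in $A_k\in\cA$; this already settles the assertion $A_{k_0}\in\cA$. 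The remaining three assertions all follow from a single quantitative statement, the \emph{quadratic convergence} $g(A_k)\in N^{2^k}\cdot\cA$ for every $k\geq 0$.

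The core step is to prove this by induction on $k$; the case $k=0$ is clear since $g(A_0)=N$. For the inductive step, set $C:=-g(A_k)\tilde g(A_k)\in g(A_k)\cdot\cA$, so that $A_{k+1}=A_k+C$. Applying the second relation of Lemma~\ref{lem31} with $B=A_k$ and $h=g$ yields
\[ g(A_{k+1})-g(A_k)-D(g)(A_k)\cdot C\in C^2\cdot\cA. \]
Evaluating the B\'ezout relation at $A_k$ gives $\tilde g(A_k)D(g)(A_k)=I-\tilde g'(A_k)g(A_k)$, whence
\[ g(A_k)+D(g)(A_k)\cdot C=g(A_k)-g(A_k)\tilde g(A_k)D(g)(A_k)=g(A_k)^2\tilde g'(A_k)\in g(A_k)^2\cdot\cA. \]
Since also $C^2\in g(A_k)^2\cdot\cA$, combining the two displays shows $g(A_{k+1})\in g(A_k)^2\cdot\cA$. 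Feeding in the inductive hypothesis $g(A_k)\in N^{2^k}\cdot\cA$ and using that $\cA$ is commutative, we obtain $g(A_{k+1})\in N^{2^{k+1}}\cdot\cA$, completing the induction.

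With this estimate the theorem falls out. Taking $k=k_0$ with $2^{k_0}\geq m$ gives $g(A_{k_0})\in N^{2^{k_0}}\cdot\cA=\{0\}$, because $N^m=0$; hence $g(A_{k_0})=0$. For the difference, I would telescope: $A-A_{k_0}=\sum_{k=0}^{k_0-1}g(A_k)\tilde g(A_k)\in N\cdot\cA$, say $A-A_{k_0}=Nw$ with $w\in\cA$; then $(A-A_{k_0})^m=N^mw^m=0$, so $A-A_{k_0}$ is nilpotent. For semisimplicity, note that $g(A_{k_0})=0$ forces the minimal polynomial $\mu$ of $A_{k_0}$ to divide $g$, so $\mu$ is square-free by Lemma~\ref{lem1}(a); that is, $\gcd(\mu,D(\mu))=1$, where $D$ denotes the formal derivative. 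As the gcd is stable under the extension $K\subseteq\bar K$, the polynomial $\mu$ has no repeated roots in $\bar K$, so $A_{k_0}$ is diagonalisable over $\bar K$, i.e.\ semisimple. Finally, $D:=A_{k_0}$ and $N:=A-A_{k_0}$ both lie in $\cA$ and therefore commute, so $A=D+N$ satisfies the defining properties of the Jordan--Chevalley decomposition.

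The main obstacle---and the entire point of the Newton-type iteration---is the quadratic-convergence step of the second paragraph: the first-order Taylor term $D(g)(A_k)\cdot C$ must be arranged to cancel $g(A_k)$ up to second order, and it is precisely the choice $C=-g(A_k)\tilde g(A_k)$ together with the B\'ezout relation that achieves this; everything else is bookkeeping. (If uniqueness of the decomposition is also wanted, it is the standard argument: comparing two such decompositions, the difference of the semisimple parts is semisimple and equals the difference of the nilpotent parts, which is nilpotent, and a matrix that is both semisimple and nilpotent is zero.)
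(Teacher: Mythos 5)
Your proposal is correct and takes essentially the same route as the paper: the identical quadratic-convergence induction $g(A_k)\in g(A)^{2^k}\cdot\cA$ via Lemma~\ref{lem31} and the B\'ezout relation, with only cosmetic variations (you recover nilpotency of $A-A_{k_0}$ by telescoping instead of carrying $A-A_k\in g(A)\cdot\cA$ inside the induction, and you reach semisimplicity through the minimal polynomial and stability of the gcd under field extension, where the paper argues directly with $g$ over a splitting field). The one point to tighten is your parenthetical uniqueness sketch: ``the difference of the semisimple parts is semisimple'' is only valid because $D,D'$ commute (and likewise $N,N'$), which requires observing that $D,N\in\cA$ are polynomials in $A$ while $D',N'$ commute with $A$, and then invoking simultaneous diagonalisation --- exactly the steps the paper spells out.
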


\begin{proof} We somewhat re-organise the proof in \cite{cez} and 
check that everything still works in our slightly different setting.
First note that, by a simple induction on $k$, we have $A_k\in \cA$
for all $k\geq 0$. The crucial step is to show:
\begin{equation*}
A-A_k\in g(A)\cdot \cA \quad\mbox{and}\quad g(A_k)\in g(A)^{2^k}\cdot\cA
\quad \mbox{for all $k\geq 0$}.\tag{$*$}
\end{equation*}
We use induction on $k$. For $k=0$ we have $A=A_0$ and ($*$) is clear.
Now let $k\geq 0$. We write $A_{k+1}=A_k+B$ where $B:=-g(A_k)\cdot 
\tilde{g}(A_k)\in g(A_k)\cdot \cA$. By induction, $g(A_k)\in g(A)^{2^k}
\cdot \cA\subseteq g(A)\cdot \cA$ and so $B\in g(A)\cdot \cA$. Also 
by induction, $A-A_k\in g(A)\cdot\cA$ and, hence, $A-A_{k+1}=(A-A_k)-B 
\in g(A)\cdot\cA$. Now consider $g(A_k)$. By the second formula in 
Lemma~\ref{lem31}, we have 
\[g(A_{k+1})-g(A_k)-D(g)(A_k)\cdot B\in B^2\cdot \cA.\]
Using now the identity $1=\tilde{g}D(g)+\tilde{g}'g$, we obtain 
\begin{align*}
-D(g)&(A_k)\cdot B=D(g)(A_k)\cdot \tilde{g}(A_k)\cdot g(A_k)\\ &= 
(1-\tilde{g}'g)(A_k)\cdot g(A_k)=g(A_k)-\tilde{g}'(A_k)\cdot g(A_k)^2.
\end{align*}
This yields $g(A_{k+1})-\tilde{g}'(A_k)\cdot g(A_k)^2\in B^2\cA$. Since 
$g(A_k)\in g(A)^{2k}\cdot \cA$ and $B\in g(A_k)\cdot \cA$, we deduce 
that $g(A_{k+1})\in g(A)^{2^{k+1}}\cdot \cA$, as required.

Having established ($*$), we can now conclude as follows. Since 
$2^{k_0}\geq m$, we have $f\mid g^m\mid g^{2^{k_0}}$ and so $g^m(A)=
g^{2^{k_0}}(A)=f(A)=0$. Hence, ($*$) implies that $g(A_{k_0})=0$ and 
$(A-A_{k_0})^m=0$. Now it is known that we can always find a field $L
\supseteq K$ such that $g\in K[X]$ splits into linear factors
over~$L$. Since $g$ is square-free, that is, $\mbox{gcd}(g,D(g))=1$, it 
follows that~$g$ does not have any repeated roots in $L$.  (If $\lambda
\in L$ was such a repeated root, then $g=(X-\lambda)^2h$ for some 
$h\in K[X]$ and so $D(g)=2(X-\lambda)h+(X-\lambda)^2D(h)$ would also 
be divisible by $X-\lambda$, contradiction.) Hence, $A$ will be 
diagonalisable over~$L$. (This is a standard result in Linear Algebra 
and can be established directly, without knowledge of the Jordan normal 
form; see, for example, \cite[Theorem~7.16]{fis}.) Finally, we have 
already noted that $A_{k_0}\in \cA$.

Uniqueness also follows by a standard argument. Suppose that we also 
have $A=D'+N'$ where $D'$ is semisimple, $N'$ is nilpotent and
$D'\cdot N'=N'\cdot D'$. Then consider the identity $D-D'=N'-N$. 
Since $D',N'$ commute with each other, they commute with $A$. Since
$A_{k_0}\in \cA$, we also have $D,N\in \cA$. Hence, $D,D'$ commute with 
each other, and $N,N'$ commute with each other. But then $N'-N$ will 
also be nilpotent and so $(D-D')^d=0$ for some $d\in\N$. Since $D,D'$ 
are diagonalisable (over some $L\supseteq K$) and commute, they can in 
fact be simultaneously diagonalised (see, for example, Exercise~39 in 
\cite[\S 5.4]{fis}). Hence $D-D'$ can also be diagonalised. But then 
$(D-D')^d=0$ implies $D-D'=0$; so $D=D'$ and also $N=N'$.
\end{proof}

%In the above proof, we have used the general fact that every polynomial
%in $K[X]$ splits into into linear factors over some larger field
%$L\supseteq K$. In a basic course on Linear Algebra, this result will not 
%yet be available. But then one could just state the theorem for matrices 
%whose characteristic polynomial splits into linear factors over $K$.

\begin{exmp} \label{ex1} The {\sf GAP} package mentioned in the 
Introduction now also provides the function {\tt JordanChevalleyDecMat}, 
which takes as input a matrix $A\in M_n(K)$ and a non-constant polynomial 
$f\in K[X]$ such that $f(A)=0$; the output are $D,N$ as above. 

For the example matrix $U\in M_{15}(\C)$ in \cite[\S 3]{cez} (for which
the eigenvalues can only be computed approximately), we obtain $D,N$ in a 
few milli-seconds. 

Now let $K=\F_2$ (the field with~$2$ elements) and 
$A=M_2\in M_{4370}(\F_2)$ be the (challenging) test matrix already considered 
at the end of \cite{frob}. The {\sf GAP} function {\tt MinimalPolynomial} 
produces the minimal polynomial $\mu_A\in \F_2[X]$ in a few seconds; we 
have $\mbox{deg}(\mu_A)=2097$. The square-free part $g\in \F_2[X]$ of 
$\mu_A$ has degree~$2087$. The Euclidean Algorithm quickly yields
$\tilde{g},\tilde{g}' \in \F_2[X]$ such that $1=\tilde{g} D(g)+ 
\tilde{g}'g$; here, $\mbox{deg}(\tilde{g})=2085$ and $\mbox{deg}
(\tilde{g}')=2084$. The function {\tt JordanChevalleyDecMat} then 
takes about 5 minutes to compute~$D,N$; the matrix $N$ has minimal 
polynomial $X^5$. 

Two further remarks:

1) One has to expect that the computation for $A=M_2$ as above takes a 
considerable time, because it requires the evaluation of $g(A_k)$ and 
$\tilde{g}(A_k)$ for $k=1,2,3$, which involves thousands of matrix 
multiplications in $M_{4370}(\F_2)$. In order to perform this in 
5 minutes, we actually first compute the Frobenius normal form of 
$A$ and then apply the function to each diagonal block in that normal form;
these diagonal blocks are quite sparse and, hence, computations will be 
much faster. We provide a function {\tt JordanChevalleyDecMatF} which 
does this automatically. (Applying {\tt JordanChevalleyDecMat} 
directly to~$A$ takes around 45 minutes.)

2) We do obtain the Jordan--Chevalley decomposition in this example, but
it is practically impossible to compute the Jordan normal form. This is 
because the square-free polynomial $g\in\F_2[X]$ (which is the minimal 
polynomial of the semisimple part $D$ of $A$) has irreducible factors of 
degree $1$, $2$, $4$, $6$, $88$, $197$, $854$, $934$. So, although exact 
computations are possible in finite fields, one would need to go to an 
enormous field $L\supseteq \F_2$ in order to write down the eigenvalues 
of~$A$.
\end{exmp}

%%%%%%%%%%%%%%%%%%%%%%%%%%%%%%%%%%%%%%%%%%%%%%%%%%%%%%%%%%%%%%%%%%%%%%%

%\bigskip
%\noindent {\itshape\scriptsize IDSR -- Lehrstuhl f\"ur Algebra, Universit\"at 
%Stuttgart, Pfaffenwaldring 57, 70569 Stuttgart, Germany\\ 
%meinolf.geck@mathematik.uni-stuttgart.de}

\begin{thebibliography}{131}

\bibitem{cez}
D. Couty, J. Esterle, R. Zarouf, D\'ecomposition effective de 
Jordan--Chevalley, Gaz. Math. {\bf 129} (2011), 29--49.

\bibitem{fis}
S. H. Friedberg, A. J. Insel, L. E. Spence, Linear Algebra, 4th edition,
Pearson Education, 2003.

\bibitem{frob}
M. Geck, On Jacob's construction of the rational canonical form of a
matrix, Electron. J. Linear Algebra {\bf 36} (2020), 177--182.

\bibitem{gap4}
The GAP Group, \textit{GAP -- Groups, Algorithms, and Programming}.
  Version 4.11.1, 2021. (\url{http://www.gap-system.org})

\bibitem{ln} 
R. Lidl, H. Niederreiter, \textit{Introduction to finite fields
and their applications},  Cambridge University Press, 1986.
\end{thebibliography}
\end{document}